\newtheorem{theorem}{Theorem}[section]
\newtheorem{proposition}[theorem]{Proposition}
\newtheorem{corollary}[theorem]{Corollary}
\theoremstyle{definition}
\newtheorem{definition}[theorem]{Definition}
\newtheorem{remark}[theorem]{Remark}
\newcommand{\darrow}{\!\downarrow}
\newcommand{\uarrow}{\!\uparrow}
\renewcommand{\leq}{\leqslant}
\newcommand{\fa}{\forall}
\newcommand{\vph}{\varphi}
\newcommand{\A}{\mathcal{A}}
\newcommand{\K}{\mathcal{K}}
\newcommand{\LL}{\mathcal{L}}
\newcommand{\dom}{\mathrm{dom}}
\begin{document}

\title[Partial combinatory algebra and generalized numberings]
{Partial combinatory algebra and generalized numberings}

\author[H. P. Barendregt]{Henk Barendregt}
\address[Henk Barendregt]{Radboud University Nijmegen\\
Institute for Computing and Information Sciences\\
P.O. Box 9010, 6500 GL Nijmegen, the Netherlands.
} \email{henk@cs.ru.nl}

\author[S. A. Terwijn]{Sebastiaan A. Terwijn}
\address[Sebastiaan A. Terwijn]{Radboud University Nijmegen\\
Department of Mathematics\\
P.O. Box 9010, 6500 GL Nijmegen, the Netherlands.
} \email{terwijn@math.ru.nl}

\begin{abstract} 
Generalized numberings are an extension of Ershov's notion of 
numbering, based on partial combinatory algebra (pca) instead of 
the natural numbers. We study various algebraic properties of 
generalized numberings, relating properties of the numbering 
to properties of the pca. As in the lambda calculus, 
extensionality is a key notion here.   
\end{abstract}

\keywords{precomplete numberings, partial combinatory algebra, 
extensionality, models of the lambda calculus}

\subjclass[2010]{%
03B40, 
03D45, 
03D80  
}

\date{\today}

\maketitle

\section{Introduction}

A numbering is a surjective mapping $\gamma :\omega \rightarrow S$ 
from the natural numbers $\omega$ to a set $S$.  
The theory of numberings was started by Ershov in a series of 
papers, beginning with \cite{Ershov} and \cite{Ershov2}. 
Ershov studied the computability-theoretic properties of numberings, 
as generalizations of numberings of the partial computable functions. 
In particular, he called a numbering {\em precomplete\/} if 
for every partial computable unary function $\psi$ there exists a 
computable unary $f$ such that for every~$n$
\begin{equation} \label{precomplete}
\psi(n)\darrow \; \Longrightarrow \; \gamma(f(n))=\gamma(\psi(n)). 
\end{equation}
Following Visser, we say that
{\em $f$ totalizes $\psi$ modulo~$\gamma$}.
Ershov showed that Kleene's recursion theorem holds for arbitrary
precomplete numberings. 
Visser~\cite{Visser} extended this to his so-called 
``anti diagonal normalization theorem'' (ADN theorem). 
Another generalization of the recursion theorem is the famous
Arslanov completeness criterion \cite{Arslanov}, 
that extends the recursion theorem from computable functions to 
all functions bounded by an incomplete computably enumerable (c.e.) 
Turing degree.
Barendregt and Terwijn~\cite{BarendregtTerwijn} showed that 
Arslanov's result also holds for any precomplete numbering. 
In Terwijn~\cite{Terwijn} a joint generalization of Arslanov's 
completeness criterion and Visser's ADN theorem was proved. 
It is currently open whether this joint generalization also 
holds for every precomplete numbering.

A classic example of numberings are numberings of the partial 
computable functions. Such a numbering is {\em acceptable\/} 
if it can be effectively translated back and forth into the standard 
numbering of the p.c.\ functions $\vph_e$.
Rogers~\cite{Rogers1967} proved that a numbering is acceptable 
if and only if it satisfies both the enumeration theorem 
and parametrization (also known as the S-m-n--theorem). 
It follows from this that every acceptable numbering is precomplete. 
On the other hand, Friedberg~\cite{Friedberg} showed that there 
exist an effective numbering of the p.c.\ functions without repetitions. 
Friedberg's 1-1 numbering is not precomplete, as can be seen as follows. 
Suppose that $\gamma:\omega\rightarrow \mathcal{P}$ is a 1-1 numbering of 
the (unary) p.c.\ functions that is precomplete. 
By \eqref{precomplete}, we then have for every p.c.\ function $\psi$ 
a computable function $f$ such that
$$
\psi(n)\darrow 
\; \Longrightarrow \; \gamma(f(n))=\gamma(\psi(n))
\; \Longrightarrow \; f(n)= \psi(n). 
$$
The second implication follows because $\gamma$ is 1-1. 
So we see that in fact $f$ is a total extension of~$\psi$. 
But it is well-known that there exist p.c.\ functions that do not 
have total computable extensions. 
So we see that 1-1 numberings of the p.c.\ functions are never precomplete.
For more about 1-1 numberings see Kummer~\cite{Kummer}. 

A topic closely related to numberings is that of computably 
enumerable equivalence relations (ceers). 
For every numbering $\gamma$ we have the equivalence relation 
defined by $n \sim_\gamma m$ if $\gamma(n) = \gamma(m)$. Conversely, 
for every countable equivalence relation, we have the numbering of 
its equivalence classes. Hence the above terminology about 
numberings also applies to ceers. 
Lachlan~\cite{Lachlan}, 
following work of Bernardi and Sorbi~\cite{BernardiSorbi}, 
proved that all precomplete ceers are computably isomorphic.
For a recent survey about ceers, see Andrews, Badaev, and Sorbi~\cite{ABS}.

In the examples of numberings given above, the set $\omega$ 
is not merely a set used to number the elements of a set $S$, 
but it carries extra structure as the domain of the partial 
computable functions, making it into a so-called 
partial combinatory algebra (pca). 
Combinatory completeness is the characteristic property 
that makes a structure with an application operator a pca. 
This is the analog of the S-m-n--theorem (parametrization) for the 
p.c.\ functions. 
In section~\ref{sec:pca} below we review the basic definitions of pca. 

We can extend the notion of numbering from $\omega$ to arbitrary 
pca's as follows. 
A {\em generalized numbering\/} is a surjective mapping 
$\gamma\colon \A \rightarrow S$, where $\A$ is a pca and $S$ is a set. 
This notion was introduced in Barendregt and Terwijn~\cite{BarendregtTerwijn}.
We also have a notion of precompleteness for generalized 
numberings, analogous to Ershov's notion (Definition~\ref{def:precomplete}). 
Precompleteness of generalized numberings is related to the topic
of complete extensions. For example, the identity on a pca $\A$ 
is precomplete if and only if every element of $\A$ (seen as a 
function on $\A$) has a total extension in~$\A$.

In section~\ref{sec:smooth} we show that the numbering of functions of a pca
is precomplete, which is the analog of the precompleteness of the 
standard numbering of the p.c.\ functions.
In general the functions modulo extensional equivalence do not form 
a pca. This prompts the definition of the notion of {\em algebraic\/} 
numbering, which is a generalized numbering that preserves the algebraic 
structure of the pca. 

Below we study generalized numberings in relation to the 
algebraic structure of the pca $\A$. Just as in the lambda-calculus, 
the notion of extensionality is central here. 
A pca is called {\em extensional\/} if $f=g$ whenever 
$fx \simeq gx$ for every~$x$. We have a similar notion of 
extensionality based on generalized numberings 
(Definitions~\ref{def:ext}).
In section~\ref{sec:ext} we show that there is a relation between 
extensionality (an algebraic property) and precompleteness 
(a computability theoretic property).

In section~\ref{sec:strongext} we introduce strong extensionality, 
(Definition~\ref{def:strext}),
and in section~\ref{sec:aux} we introduce some auxiliary equivalence 
relations to aid the comparison between extensional and algebraic.

In section~\ref{sec:algext} we investigate the relations between various 
notions of extensionality and algebraic numberings. 
We will see that neither notion implies the other, 
and that they are in a sense complementary notions.

The Friedberg numbering of the p.c.\ functions quoted above 
also exists for the class of c.e.\ sets. 
In section~\ref{sec:1-1} we discuss the existence of 1-1 numberings 
of classes of sets that are uniformly c.e.\ by considering the 
complexity of equality on those classes. 

Our notation is mostly standard.
In the following, $\omega$ denotes the natural numbers.
$\vph_e$ denotes the $e$-th partial computable
(p.c.) function, in the standard numbering of the p.c.\ functions.
We write $\vph_e(n)\darrow$ if this computation is defined,
and $\vph_e(n)\uarrow$ otherwise.
$W_e = \dom(\vph_e)$ denotes the $e$-th computably enumerable (c.e.) set. 
For unexplained notions from computability theory we refer to 
Odifreddi~\cite{Odifreddi} or Soare~\cite{Soare}.
For background on lambda-calculus we refer to Barendregt~\cite{Barendregt}.

\section{Partial combinatory algebra} \label{sec:pca}

Combinatory algebra predates the lambda-calculus, 
and was introduced by Sch\"{o}nfinkel~\cite{Schoenfinkel}. 
It has close connections with the lambda-calculus, and played an 
important role in its development. 
Partial combinatory algebra (pca) was first studied in 
Feferman~\cite{Feferman}. 
To fix notation and terminology, we will briefly recall the 
definition of a pca, and for a more elaborate treatment refer 
to van Oosten~\cite{vanOosten}.

\begin{definition} \label{def:pca}
A {\em partial applicative structure\/} (pas) is a set $\A$ together
with a partial map from $\A\times \A$ to $\A$.  We denote the image of
$(a,b)$, if it is defined, by $ab$, and think of this as `$a$ applied
to $b$'.  If this is defined we denote this by $ab\darrow$.  By
convention, application associates to the left. We write $abc$ instead
of $(ab)c$.  {\em Terms\/} over $\A$ are built from elements of $\A$,
variables, and application. If $t_1$ and $t_2$ are terms then so is
$t_1t_2$.  If $t(x_1,\ldots,x_n)$ is a term with variables $x_i$, and
$a_1,\ldots,a_n {\in} \A$, then $t(a_1,\ldots,a_n)$ is the term obtained
by substituting the $a_i$ for the~$x_i$.  For closed terms
(i.e.\ terms without variables) $t$ and $s$, we write $t \simeq s$ if
either both are undefined, or both are defined and equal. 
Here application is \emph{strict} in the sense that for $t_1t_2$ to be 
defined, it is required that both $t_1,t_2$ are defined.
We say that an element $f{\in} \A$ is {\em total\/} if $fa\darrow$ for
every $a{\in} \A$.

A pas $\A$ is {\em combinatory complete\/} if for any term
$t(x_1,\ldots,x_n,x)$, $0\leq n$, with free variables among
$x_1,\ldots,x_n,x$, there exists a $b{\in} \A$ such that
for all $a_1,\ldots,a_n,a{\in} \A$,
\begin{enumerate}[\rm (i)]

\item $ba_1\cdots a_n\darrow$,

\item $ba_1\cdots a_n a \simeq t(a_1,\ldots,a_n,a)$.

\end{enumerate}
A pas $\A$ is a {\em partial combinatory algebra\/} (pca) if
it is combinatory complete.
\end{definition}

\begin{theorem} {\rm (Feferman~\cite{Feferman})} \label{Feferman}
A pas $\A$ is a pca if and only if it has elements $k$ and $s$
with the following properties for all $a,b,c\in\A$:
\begin{itemize}

\item $k$ is total and $kab = a$, 

\item $sab\darrow$ and $sabc \simeq ac(bc)$.

\end{itemize}
\end{theorem}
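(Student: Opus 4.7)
The plan is to handle the two directions separately. The direction ($\Rightarrow$) is essentially immediate from combinatory completeness applied to carefully chosen terms, while ($\Leftarrow$) is the classical bracket-abstraction argument.

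For ($\Rightarrow$), I would first apply combinatory completeness to the term $t(x_1,x) = x_1$ (with $n=1$). The resulting witness $b$ satisfies $b a_1 \darrow$ for all $a_1$, so $k := b$ is total, and $b a_1 a = a_1$ gives the $k$-equation. Applying combinatory completeness analogously to the term $t(x_1,x_2,x_3) = x_1 x_3 (x_2 x_3)$ (with $n=2$) yields an element $s$ with $s a b \darrow$ and $s a b c \simeq a c (b c)$, which is exactly the $s$-axiom.

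For ($\Leftarrow$), I would define bracket abstractions $\lambda^* x . t$ by recursion on~$t$: put $\lambda^* x . x := skk$; put $\lambda^* x . t := kt$ whenever $x$ does not occur in $t$ (so $t$ is another variable or an element of~$\A$); and put $\lambda^* x . (t_1 t_2) := s (\lambda^* x . t_1)(\lambda^* x . t_2)$. Then by induction on~$t$ one verifies two facts: each such abstraction evaluates to a defined element of~$\A$ (using that $k$ is total and that $sab$ is always defined), and $(\lambda^* x . t)\, a \simeq t[x := a]$ for every $a \in \A$ (using the $k$- and $s$-equations, together with $skk a \simeq ka(ka) = a$ for the base case). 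Iterating, the element $b := \lambda^* x_1 \cdots \lambda^* x_n \lambda^* x . t$ then witnesses combinatory completeness for the given term $t(x_1,\ldots,x_n,x)$: the definedness clause (i) follows from the first fact (applied after substituting the $a_i$), and clause (ii) follows from $n+1$ applications of the second fact.

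The main obstacle I anticipate is the bookkeeping around strictness of application, which has to be threaded through the inductive step for $\lambda^* x . (t_1 t_2)$. One needs the outer expression $s(\lambda^* x . t_1)(\lambda^* x . t_2)$ to denote an element of~$\A$ before one can even talk about applying it to~$a$; this rests on the unconditional definedness of $sab$ combined with the induction hypothesis applied to the two sub-abstractions. Once applied to~$a$, the $s$-axiom produces $(\lambda^* x . t_1)\,a \cdot \bigl((\lambda^* x . t_2)\,a\bigr)$, and by the induction hypothesis this matches $t_1[x{:=}a]\, t_2[x{:=}a]$ on both sides, including the pattern of definedness, precisely because application is strict on both sides. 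Provided this is done carefully, the remaining clauses of the induction are routine.
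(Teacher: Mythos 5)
Your proposal is correct, and it is the standard argument: the forward direction instantiates combinatory completeness at the terms $x_1$ and $x_1x_3(x_2x_3)$, and the converse is the usual bracket-abstraction induction. The paper itself does not prove this theorem --- it is stated as a known result and attributed to Feferman~\cite{Feferman} --- so there is no in-text proof to compare against; your write-up supplies exactly the argument the citation points to. The one place where partiality genuinely matters is the clause $\lambda^* x.t := kt$, which must be restricted to \emph{atomic} $t$ (with composite $x$-free terms still routed through the $s$-clause), since otherwise $kt$ could be undefined for an undefined closed subterm $t$ and clause (i) would fail; your parenthetical remark shows you intend precisely this restriction, so the proof goes through.
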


Note that $k$ and $s$ are nothing but partial versions of the 
familiar combinators from combinatory algebra.
As noted in \cite[p95]{Feferman}, Theorem~\ref{Feferman} has the 
consequence that in any pca we can define lambda-terms in the usual 
way (cf.\ Barendregt~\cite[p152]{Barendregt}):\footnote{%
Because the lambda-terms in combinatory algebra do not have
the same substitution properties as in the lambda calculus, 
we use the notation $\lambda^*$ rather than~$\lambda$.
Curry used the notation $[x]$ to distinguish the two 
(cf.\ \cite{CurryFeys}).}  
For every term $t(x_1,\ldots,x_n,x)$, $0\leq n$, with free variables among
$x_1,\ldots,x_n,x$, there exists a term $\lambda^* x.t$ 
with variables among $x_1,\ldots,x_n$,
with the property that for all $a_1,\ldots,a_n,a {\in} \A$,
\begin{itemize}

\item $(\lambda^* x.t)(a_1,\ldots, a_n)\darrow$,

\item $(\lambda^* x.t)(a_1,\ldots, a_n)a \simeq t(a_1,\ldots,a_n,a)$.

\end{itemize}

The most important example of a pca is Kleene's {\em first model\/} $\K_1$, 
consisting of $\omega$ with application defined as $nm = \vph_n(m)$.
Kleene's {\em second model\/} $\K_2$ \cite{KleeneVesley} 
consists of the reals (or more conveniently Baire space $\omega^\omega$), 
with application $\alpha\beta$ defined as applying the continuous 
functional with code $\alpha$ to the real $\beta$. 
See Longley and Normann~\cite{LongleyNormann} for a more detailed definition.

In the axiomatic approach to the theory of computation, there is 
the notion of a basic recursive function theory (BRFT). 
Since this is supposed to model basic computability theory, 
it will come as no surprise that every BRFT gives rise to a pca. 
In case the domain of the BRFT is $\omega$, it actually contains 
a copy of the p.c.\ functions.
See Odifreddi~\cite{Odifreddi} for a discussion of this, 
and references to the literature, including the work of 
Wagner, Strong, and Moschovakis. 

Other pca's can be obtained by relativizing $\K_1$, or by generalizing 
$\K_2$ to larger cardinals. 
Also, every model of Peano arithmetic gives rise to a pca by considering 
$\K_1$ inside the model. 
Further constructions of pca's are discussed in 
van Oosten and Voorneveld~\cite{vanOostenVoorneveld}, and in 
van Oosten~\cite{vanOosten} even more examples of pca's are listed. 
Finally, it is possible to define a combination of Kleene's models
$\K_1$ and $\K_2$ (due to Plotkin and Scott) using enumeration operators, 
cf.\ Odifreddi~\cite[p857ff]{OdifreddiII}.

\section{Generalized numberings}

A {\em generalized numbering\/} \cite{BarendregtTerwijn} is a surjective mapping
$\gamma\colon \A \rightarrow S$, where $\A$ is a pca and $S$ is a set. 
As in the case of ordinary numberings, we have an equivalence relation on 
$\A$ defined by $a \sim_\gamma b$ if $\gamma(a) = \gamma(b)$.

As for ordinary numberings, in principle every generalized 
numbering corresponds to an equivalence relation on $\A$, and 
conversely. However, below we will mainly be interested in 
numberings that also preserve the algebraic structure of the pca, 
making this correspondence less relevant.

The notion of precompleteness for generalized numberings was 
defined in \cite{BarendregtTerwijn}. 
By \cite[Lemma 6.4]{BarendregtTerwijn}, 
the following definition is equivalent to it. 

\begin{definition} \label{def:precomplete}
A generalized numbering $\gamma \colon \A \rightarrow S$ is 
{\em precomplete\/}\footnote{
There is also a notion of completeness for numberings, that we 
will however have no use for in this paper. 
A precomplete generalized numbering $\gamma$ is {\em complete\/} if 
there is a special element $s{\in}S$ (not depending on $b$) 
such that in addition to \eqref{precomplete2},
$\gamma(fa) = s$ for every $a$ with $ba\uarrow$.}
if for every $b{\in} \A$ there exists a total element $f{\in} \A$ 
such that for all $a{\in} \A$, 
\begin{equation} \label{precomplete2}
b{a}\darrow \; \Longrightarrow \; f{a} \sim_\gamma b{a}. 
\end{equation}
In this case, we say that {\em $f$ totalizes $b$ modulo~$\sim_\gamma$\/}.
\end{definition}

In \cite{BarendregtTerwijn}, generalized numberings were used to 
prove a combination of a fixed point theorem for pca's (due to 
Feferman~\cite{Feferman}), and Ershov's recursion theorem \cite{Ershov2} 
for precomplete numberings on~$\omega$. 

Every pca $\A$ has an associated generalized numbering, 
namely the identity $\gamma_\A: \A\rightarrow \A$. 
In section~\ref{sec:ext} we will see examples of when the numbering 
$\gamma_\A$ is or is not precomplete.

\section{Algebraic numberings} \label{sec:smooth}

\begin{definition} \label{def:e}
Let $\A$ be a pca. Define an equivalence on $\A$ by 
$a\sim_e b$ if 
$$
\fa x\in\A (ax \simeq bx).
$$ 
\end{definition}

The following result generalizes the precompleteness of the 
numbering $n\mapsto \vph_n$ of the partial computable functions.

\begin{proposition} \label{prop:precomplete}
The natural map $\gamma_e: \A \rightarrow \A/{\sim_e}$ is precomplete.
Note that $\gamma_e$ is a generalized numbering of the equivalence 
classes.  
\end{proposition}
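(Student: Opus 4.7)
The plan is to construct the required total element $f$ directly from the $\lambda^*$-abstraction machinery recalled after Theorem~\ref{Feferman}. Given $b{\in}\A$, consider the term $bax$ with constant $b$ and free variables $a, x$, and set $f := \lambda^* a.\, \lambda^* x.\, bax$, which is closed and hence denotes an element of $\A$.

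The first step is to verify totality of $f$. By the defining property of $\lambda^*$-abstraction, $fa = \big(\lambda^* a.\, \lambda^* x.\, bax\big)(a)\darrow$ for every $a{\in}\A$, so $f$ is total. The second step is to verify that $fa \sim_{\gamma_e} ba$ whenever $ba\darrow$. Two applications of the $\lambda^*$-property yield $(fa)x \simeq bax$ for all $a, x{\in}\A$. When $ba\darrow$, left-associativity of application gives $bax \simeq (ba)x$, so $(fa)x \simeq (ba)x$ holds for every $x$, which is exactly $fa \sim_e ba$.

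The argument is essentially a direct application of combinatory completeness, mirroring the classical proof that the standard numbering $n \mapsto \vph_n$ of the p.c.\ functions is precomplete, with $\lambda^*$ replacing the S-m-n theorem. There is no real obstacle; the only point to keep in mind is that $f$ must be the \emph{double} abstraction $\lambda^* a.\, \lambda^* x.\, bax$ rather than just $\lambda^* x.\, bax$, so that the outer abstraction secures totality of $f$ while the inner one secures the extensional agreement of $fa$ with $ba$.
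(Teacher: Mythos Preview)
Your proof is correct and follows essentially the same approach as the paper's: both obtain the totalizer $f$ from combinatory completeness applied to the term $bxy$ (you phrase this via the derived $\lambda^*$-abstraction, the paper invokes Definition~\ref{def:pca} directly), and both conclude $fa\sim_e ba$ from $fac\simeq bac$ for all~$c$. The double abstraction you highlight is exactly what the two clauses (i) and (ii) of combinatory completeness provide in one step.
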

\begin{proof}
Let $b\in\A$. We have to prove that there is a total $f\in \A$ such 
that when $ba\darrow$ then $fa\sim_e ba$, i.e.\ 
$\fa c{\in}\A (fac \simeq bac)$. 
This follows from the combinatory completeness of $\A$: 
Consider the term $bxy$. By combinatory completeness there exists $f\in A$ 
such that for all $a,c\in \A$, $fa\darrow$ and $fac \simeq bac$. 
\end{proof}

\begin{remark} \label{remark}
Note that $\A/{\sim_e}$ is in general {\em not\/} a pca, at least not 
with the natural application defined by 
$\overline a \cdot \overline b = \overline{a\cdot b}$.
For example, in Kleene's first model $\K_1$ we have for 
$n,m\in \omega$ that $n\sim_e m$ if $\vph_n = \vph_m$. 
Now we can certainly have that $m\sim_e m'$, but 
$\vph_n(m) \neq \vph_n(m')$, so we see that the natural definition
of application $\overline n \cdot \overline m = \overline{n\cdot m}$ in 
$\omega/{\sim_e}$ is not independent of the choice of representative~$m$. 
\end{remark}

The previous considerations prompt the following definition.
First we extend the definition of $\sim_\gamma$ from $\A$ to the set of 
closed terms over $\A$ as follows:

\begin{definition} \label{def:equivalent}
$a \sim_\gamma b$ if either $a$ and $b$ are terms that are both 
undefined, or $a,b\in\A$ and $\gamma(a) = \gamma(b)$.
\end{definition} 
This extended notion $\sim_\gamma$ is the analog of the 
Kleene equality $\simeq$.

\begin{definition} \label{def:smooth}
Call a generalized numbering $\gamma:\A\rightarrow S$ {\em algebraic\/} 
if $\sim_\gamma$ is a congruence, i.e.\ 
$$
a\sim_\gamma a' \wedge b\sim_\gamma b' \Longrightarrow ab \sim_\gamma a'b'.
$$
In this case, we also call the pca $\A$ $\gamma$-algebraic.
\end{definition}

If $\gamma$ is algebraic, we can factor out by $\sim_\gamma$, as in algebra:

\begin{proposition} \label{prop:divide}
Suppose that $\A$ is $\gamma$-algebraic. Then $\A/{\sim_\gamma}$ is again a pca. 
\end{proposition}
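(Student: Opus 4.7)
The natural approach is to equip $\A/{\sim_\gamma}$ with the partial application $\overline{a}\cdot\overline{b} \simeq \overline{ab}$ inherited from $\A$, and then to verify combinatory completeness by invoking Feferman's theorem (Theorem~\ref{Feferman}) with the images $\overline{k}$ and $\overline{s}$ of the $\A$-combinators.

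First I would check that this application is well-defined. Suppose $a \sim_\gamma a'$ and $b \sim_\gamma b'$. The algebraic hypothesis together with the extended definition of $\sim_\gamma$ (Definition~\ref{def:equivalent}) gives $ab \sim_\gamma a'b'$ in the extended sense: either both $ab$ and $a'b'$ are undefined, or both are defined and $\gamma$-equivalent. Hence it is unambiguous to declare that $\overline{a}\cdot\overline{b}$ is defined precisely when $ab\darrow$ for some (equivalently every) choice of representatives, with value $\overline{ab}$.

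Next I would verify the $k$- and $s$-axioms for $\overline{k}$ and $\overline{s}$. Since $k$ is total in $\A$, $ka\darrow$ for every $a\in\A$, so $\overline{k}\cdot\overline{a} = \overline{ka}$ is defined for every $\overline{a}$, showing $\overline{k}$ is total; and $kab = a$ in $\A$ gives $\overline{k}\,\overline{a}\,\overline{b} = \overline{kab} = \overline{a}$. Similarly, $sab\darrow$ in $\A$ yields $\overline{s}\,\overline{a}\,\overline{b}\darrow$, and the Kleene equality $sabc \simeq ac(bc)$ in $\A$ lifts, via repeated use of the congruence property, to $\overline{s}\,\overline{a}\,\overline{b}\,\overline{c} \simeq \overline{a}\,\overline{c}\,(\overline{b}\,\overline{c})$ in $\A/{\sim_\gamma}$. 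By Feferman's theorem the quotient is a pca.

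The main obstacle, and really the only point of substance, is the well-definedness step, which depends in an essential way on the clause in Definition~\ref{def:equivalent} that treats two simultaneously undefined terms as $\sim_\gamma$-equivalent: this is exactly what makes the \emph{definedness} (not just the value) of $\overline{a}\cdot\overline{b}$ independent of representatives. Once this is in hand, the transport of $k$ and $s$ through the quotient is essentially mechanical.
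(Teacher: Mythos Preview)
Your proposal is correct and follows the same approach as the paper: define application on the quotient by $\overline a\cdot\overline b = \overline{ab}$, use $\gamma$-algebraicity (with the extended $\sim_\gamma$ of Definition~\ref{def:equivalent}) for well-definedness, and then observe that $\overline k$ and $\overline s$ witness combinatory completeness via Theorem~\ref{Feferman}. Your write-up is simply a more explicit version of the paper's two-line proof, and your emphasis on Definition~\ref{def:equivalent} as the mechanism that makes \emph{definedness} representative-independent is exactly the right observation.
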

\begin{proof}
Define application in $\A/{\sim_\gamma}$ by 
$\overline a\cdot\overline b = \overline{a\cdot b}$.
By algebraicity this is well-defined. 
Combinatory completeness follows because we have the combinators
$\overline s$ and $\overline k$. 
\end{proof}

We note that for a generalized numbering $\gamma:\A\rightarrow S$ 
there is in general no relation between the notion of precompleteness 
and algebraicity. This follows from results in the following 
sections.
\begin{itemize}

\item algebraic does not imply precomplete. 
Namely, let $\gamma_{\K_2}$ be the identity on 
Kleene's second model~$\K_2$. 
Then $\gamma_{\K_2}$ is trivially algebraic. 
However, by \cite{BarendregtTerwijn}, the numbering $\gamma_{\K_2}$
is not precomplete.

\item precomplete does not imply algebraic.
Otherwise, by Proposition~\ref{prop} we would have that
$\A$ is $\gamma$-extensional implies that $\A$ is $\gamma$-algebraic,
contradicting Proposition~\ref{extsmooth}.
Altenatively: The canonical map $\gamma_e: \A \rightarrow \A/{\sim_e}$ 
is precomplete by Proposition~\ref{prop:precomplete}. 
However, it is not algebraic, since otherwise we would have
by Proposition~\ref{prop:divide} that $\A/{\sim_e}$ is a pca,
which in general it is not by Remark~\ref{remark}.

\end{itemize}

\section{Precompleteness and extensionality} \label{sec:ext}

We can think of combinatory completeness of a pca as an analog of the 
S-m-n--theorem (also called the parametrization theorem) 
from computability theory \cite{Odifreddi}. 
Suppose $\A$ is a pca, and $\gamma:\A\rightarrow S$ is a generalized numbering.
Every element $a\in\A$ represents a partial function on $\A$, 
namely $x\mapsto ax$. In analogy to $\K_1$, one could call these the 
partial $\A$-computable functions.  
Note that the precompleteness of the numbering $n\mapsto\vph_n$ of the 
partial computable functions follows from the S-m-n--theorem, 
and that the precompleteness of the numbering of the partial
$\A$-computable functions follows likewise from the 
combinatory completeness of $\A$ (see Proposition~\ref{prop:precomplete}).

Note that the identity on $\K_1$ is not precomplete, 
as there exist p.c.\ functions that do not have a total computable extension. 
In \cite{BarendregtTerwijn} it was shown that the identity 
on Kleene's second model $\K_2$ is also not precomplete.
In general, every pca $\A$ has the identity $\gamma_\A: \A\rightarrow \A$ 
as an associated generalized numbering, and 
$\gamma_\A$ is precomplete if and only if every element $b\in \A$ 
has a total extension $f\in \A$. 
Faber and van Oosten \cite{FabervanOosten} showed that the latter 
is equivalent to the statement that $\A$ is isomorphic to a 
total pca. Here ``isomorphic'' refers to isomorphism in the 
category of pca's introduced in Longley~\cite{Longley}.

\begin{definition} \label{def:ext}
Let $\A$ be a pca, and $\gamma:\A\rightarrow S$ a generalized numbering. 
We say that $\A$ is {\em $\gamma$-extensional\/} if 
\begin{equation} \label{ext}
\fa a\in\A (fa \simeq ga) \Longrightarrow f\sim_\gamma g
\end{equation}
for all $f,g\in \A$.
\end{definition}

In other words, $\A$ is $\gamma$-extensional if 
the relation $\sim_\gamma$ extends the relation $\sim_e$ 
from Definition~\ref{def:e}.
For the special case where $\gamma:\A\rightarrow\A$ is the identity, this is called 
{\em extensionality\/} of $\A$, cf.\ Barendregt~\cite[p1094]{Barendregt1977}.

\begin{proposition} \label{prop}
Suppose $\A$ is $\gamma$-extensional. Then $\gamma$ is precomplete. 
\end{proposition}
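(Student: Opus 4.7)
The plan is to reduce the claim directly to the precompleteness of $\gamma_e$ established in Proposition~\ref{prop:precomplete}, observing that $\gamma$-extensionality is precisely the statement $\sim_e \, \subseteq \, \sim_\gamma$ on pairs of elements of $\A$. So the totalizer modulo $\sim_e$ automatically serves as a totalizer modulo $\sim_\gamma$.

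In detail, fix $b \in \A$. First, I would apply Proposition~\ref{prop:precomplete} (or equivalently, apply combinatory completeness to the term $bxy$) to obtain a total $f \in \A$ with the property that for all $a,c \in \A$, $fa \darrow$ and $fac \simeq bac$. In particular, whenever $ba \darrow$, both $fa$ and $ba$ are genuine elements of $\A$, and they satisfy $\fa c \in \A (fac \simeq bac)$.

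Next, for each such $a$, I would invoke the $\gamma$-extensionality hypothesis \eqref{ext} applied to the two elements $fa$ and $ba$ of $\A$: the equation $\fa c \in \A (fac \simeq bac)$ is exactly the hypothesis of \eqref{ext}, whose conclusion is $fa \sim_\gamma ba$. Thus $f$ totalizes $b$ modulo $\sim_\gamma$, and since $b$ was arbitrary, $\gamma$ is precomplete.

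The argument has no real obstacle; the only small bookkeeping point to verify — and the reason the proof goes through so cleanly — is that the $f$ produced by Proposition~\ref{prop:precomplete} is total, which guarantees that $fa$ is always defined so that the $\gamma$-extensionality axiom may be legitimately applied to the elements $fa$ and $ba$ of $\A$ in the case $ba \darrow$.
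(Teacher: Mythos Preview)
Your proof is correct and essentially identical to the paper's: both use combinatory completeness applied to the term $bxy$ to obtain the total $f$, then invoke $\gamma$-extensionality to pass from $\fa c\,(fac\simeq bac)$ to $fa\sim_\gamma ba$. Your opening framing (that $\gamma$-extensionality means $\sim_e\subseteq\sim_\gamma$, so a totalizer for $\gamma_e$ is automatically one for $\gamma$) is exactly the alternative derivation the paper records in a footnote.
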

\begin{proof}
This is similar to Proposition~\ref{prop:precomplete}.
Given $b\in\A$, we have to prove that there exists a total $f\in \A$ 
such that for every $a\in\A$, $fa\sim_\gamma ba$ whenever $ba\darrow$.
Consider the term $bxy$. 
By combinatory completeness of $\A$ there exists a total $f\in\A$ such that
$fac \simeq bac$ for all $a,c\in\A$. 
Now suppose $ba\darrow$. It follows from 
$\gamma$-extensionality of $\A$ that $fa\sim_\gamma ba$.
Hence $\gamma$ is precomplete.\footnote{
Alternatively, we could derive Proposition~\ref{prop} 
from Proposition~\ref{prop:precomplete} by noticing that 
if $\gamma,\gamma' : \A \rightarrow S$ are generalized numberings 
such that $\sim_{\gamma}$ extends $\sim_\gamma'$, and 
$\gamma'$ is precomplete, then also $\gamma$ is precomplete. 
By Proposition~\ref{prop:precomplete} we have that 
$\gamma' = \gamma_e$ is precomplete, and if $\A$ is $\gamma$-extensional 
then $\sim_\gamma$ extends $\sim_e$, so it follows that 
$\gamma$ is precomplete.}
\end{proof}

In particular, we see from Proposition~\ref{prop} 
that the identity $\gamma_\A$ on $\A$ is 
precomplete if $\A$ is extensional. 

It is possible that a generalized numbering $\gamma:\A\rightarrow S$ 
is precomplete for some other reason than $\A$ being $\gamma$-extensional. 
For example, suppose that $\A$ is a total pca. 
Then $\gamma$ is trivially precomplete
(this is immediate from Definition~\ref{def:precomplete}),
but a total pca $\A$ need not be extensional, as the next proposition shows. 

\begin{proposition} \label{prop:converse}
There exists a generalized numbering $\gamma:\A\rightarrow S$ that 
is precomplete, but such that $\A$ is not $\gamma$-extensional.
\end{proposition}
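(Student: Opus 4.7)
The plan is to exploit the observation made immediately before the statement: if $\A$ is total then any generalized numbering $\gamma\colon\A\rightarrow S$ is automatically precomplete, since for any $b\in\A$ the element $f := b$ itself is total and satisfies $fa\sim_\gamma ba$ for every $a\in\A$. So it suffices to exhibit a total pca $\A$ that is \emph{not} extensional and take $\gamma = \gamma_\A$, the identity numbering; then $\gamma_\A$-extensionality coincides with extensionality of $\A$, and non-extensionality of $\A$ gives the required failure of $\gamma$-extensionality.

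For the pca itself I would use the closed term model of pure combinatory logic. Its elements are the closed terms built by formal application from two constants $s$ and $k$, quotiented by the congruence generated by the equations $kab = a$ and $sabc = ac(bc)$. Application is total because formal juxtaposition of two terms is always a term, and Theorem~\ref{Feferman} confirms that the resulting structure is a pca. To exhibit a failure of extensionality, I would take $\mathbf{I}_1 := skk$ and $\mathbf{I}_2 := sks$. Direct calculation gives
\[
\mathbf{I}_1 a = ka(ka) = a \qquad \text{and} \qquad \mathbf{I}_2 a = ka(sa) = a
\]
for every $a\in\A$, so $\mathbf{I}_1 a \simeq \mathbf{I}_2 a$ for all $a\in\A$. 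On the other hand $skk$ and $sks$ are already in normal form under the weak reduction associated with the defining equations, hence are distinct elements of $\A$; so \eqref{ext} fails with $f := \mathbf{I}_1$ and $g := \mathbf{I}_2$, exhibiting $\gamma_\A$ as the desired precomplete but not $\gamma$-extensional numbering.

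The main obstacle is the distinctness claim, which relies on the Church--Rosser theorem for weak combinatory reduction to guarantee that the axioms do not identify $skk$ with $sks$. This is a classical consistency fact which I would simply cite (for instance from Barendregt~\cite{Barendregt}) rather than reprove, since establishing it from scratch would be a substantial detour from the content of the proposition itself.
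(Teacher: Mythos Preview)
Your proposal is correct and follows the same high-level strategy as the paper: both observe that any total pca is trivially precomplete under the identity numbering, so it suffices to exhibit a total pca that fails extensionality. The only difference is the concrete witness. The paper cites the graph model $P\omega$ (and, as a second example, the closed-term model $\mathfrak{M}^0(\beta\eta)$ of the $\lambda\beta\eta$-calculus via Plotkin's $\omega$-incompleteness result), whereas you use the closed-term model of combinatory logic and explicitly display $skk$ and $sks$ as two extensionally equal but distinct identities. Your version is somewhat more self-contained, since the failure of extensionality is witnessed by a direct computation plus an appeal to Church--Rosser for weak reduction, rather than by invoking a structural fact about a particular denotational model; on the other hand the paper's examples connect the proposition to well-known models of the lambda calculus.
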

\begin{proof}
This follows from the fact that there exists a total pca that is not
extensional.
For example, let $\A$ be a model of the lambda calculus. This certainly 
does not have to be extensional, for example the graph model 
$P\omega$ is not extensional, cf.\ \cite[p474]{Barendregt}. 
(An example of a model of the lambda calculus that {\em is\/} extensional 
is Scott's model $D_\infty$.)

Another example for is the set of terms $\mathfrak{M}(\beta\eta)$ 
in the lambda calculus. 
This combinatory algebra is extensional, by inclusion of the $\eta$-rule.  
However, the set of {\em closed\/} terms $\mathfrak{M}^0(\beta\eta)$ is 
not extensional by Plotkin~\cite{Plotkin1974}. 
\end{proof}

Proposition~\ref{prop:converse} shows that the converse of 
Proposition~\ref{prop} does not hold.

By the results of \cite{BarendregtTerwijn}, combinatory completeness 
of $\A$ does not imply precompleteness of $\gamma_\A$. 
Conversely, precompleteness of $\gamma_\A$ also does not imply 
combinatory completeness of $\A$. 
Namely, simply take any total applicative system $\A$ that is not 
a pca.\footnote{An example of such a system is the set 
$\A=\omega^{<\omega}$ of finite strings of numbers, with concatenation 
of strings as application. This is a pure term model, 
in which application of terms can only increase the length of terms. 
This implies that this pas is not combinatory complete, as there 
cannot be a combinator $k$, which does reduce the length of terms.} 
Note that the identity $\gamma_\A:\A\rightarrow\A$ is precomplete, 
as the pas $\A$ is total.

\section{Strong extensionality}\label{sec:strongext}

Given a generalized numbering $\gamma:\A\rightarrow S$, 
we have two kinds of equality on $\A$: $a\simeq b$ and $a\sim_\gamma b$. 
The notion of $\gamma$-extensionality is based on the former. 
We obtain a stronger notion if we use the latter, where we use 
the extended notion of $\sim_\gamma$ for closed terms from 
Definition~\ref{def:equivalent}.

\begin{definition} \label{def:strext}
We call $\A$ {\em strongly $\gamma$-extensional\/} if 
$$
\fa x (fx \sim_\gamma gx) \Longrightarrow f\sim_\gamma g
$$
for every $f,g\in\A$.
\end{definition}

\begin{theorem} \label{strictext}
Strong $\gamma$-extensionality implies $\gamma$-extensionality,
but not conversely. 
\end{theorem}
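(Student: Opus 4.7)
The plan is to split the statement into the two separate claims: first, strong $\gamma$-extensionality implies $\gamma$-extensionality; second, a concrete witness that the converse fails.

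For the forward implication, I would fix $f,g\in\A$ satisfying the weak premise $\fa a\in\A(fa\simeq ga)$ and verify the strong premise $\fa x(fx\sim_\gamma gx)$ pointwise. For each $a$, either both $fa$ and $ga$ are undefined, in which case $fa\sim_\gamma ga$ by the first clause of Definition~\ref{def:equivalent}, or both are defined and equal as elements of $\A$, so $\gamma(fa)=\gamma(ga)$ and again $fa\sim_\gamma ga$. Strong $\gamma$-extensionality then yields $f\sim_\gamma g$, which is exactly what $\gamma$-extensionality requires.

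For the converse, the natural candidate is the canonical numbering $\gamma_e\colon \A\to\A/{\sim_e}$ of Proposition~\ref{prop:precomplete}, whose induced equivalence is $\sim_e$ itself. This numbering is trivially $\gamma_e$-extensional: the premise $\fa a(fa\simeq ga)$ is literally the definition of $f\sim_e g$. To show that strong $\gamma_e$-extensionality can fail, I would specialize to $\A=\K_1$ and use the padding lemma to pick two distinct total computable functions $h_1,h_2\colon \omega\to\omega$ whose values are, at every argument~$x$, two different indices of a fixed p.c.\ function (say, the constant-zero function). Taking $f,g$ to be indices of $h_1,h_2$, one has $\vph_{fx}=\vph_{gx}$ for all $x$, so $fx\sim_e gx$ and the strong premise holds; but $\vph_f=h_1\neq h_2=\vph_g$ forces $f\not\sim_e g$.

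The main obstacle is producing a transparent counterexample; once $\gamma_e$ is singled out as a candidate the padding argument is essentially forced, since any pca in which $\sim_e$ is strictly coarser than equality provides the flexibility needed to separate the two premises. I would expect the same pattern to go through verbatim in any pca admitting infinitely many representatives per function.
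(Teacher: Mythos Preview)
Your proposal is correct and follows essentially the same approach as the paper: both establish the forward implication by observing that $fa\simeq ga$ implies $fa\sim_\gamma ga$, and both use $\gamma_e$ on $\K_1$ for the counterexample. The only difference is cosmetic: where you invoke the padding lemma to build $h_1,h_2$, the paper simply takes distinct $d,e$ with $d\sim_e e$ and sets $f=kd$, $g=ke$, which is a cleaner instance of your construction (constant $h_1\equiv d$, $h_2\equiv e$) and has the advantage of working verbatim in any pca with two $\sim_e$-equivalent but unequal elements.
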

\begin{proof}
First, strong $\gamma$-extensionality implies $\gamma$-extensionality 
because $\fa x \; fx \simeq gx$ implies 
$\fa x \; fx \sim_\gamma gx$, so the premiss 
of the first notion is weaker than that of the second. 

To see that the implication is strict, we exhibit a pca $\A$ that 
is $\gamma$-extensional but not strongly $\gamma$-extensional.
Take $\A$ to be Kleene's first model $\K_1$, and let $\gamma = \gamma_e$, 
the numbering of equivalence classes from Proposition~\ref{prop:precomplete}.
That $\A$ is $\gamma_e$-extensional is trivial, since 
$f\sim_e g$ means precisely $\fa x \; fx\simeq gx$.

To see that $\A$ is not strongly $\gamma_e$-extensional, 
let $d,e\in \A$ be such that $d\neq e$ and 
$\fa x \; dx\simeq ex$, and define 
$f = kd$ and $g=ke$, with $k$ the combinator. 
Then $fx=d$ and $gx=e$, hence 
$\fa x \; fx \sim_e gx$ because $d\sim_e e$.
But not $\fa x \; fx \simeq gx$ because $d\neq e$, 
so $f \not\sim_e g$.
\end{proof}

\section{Left and right equivalences}\label{sec:aux}

For the discussion below (and also to aid our thinking), 
we introduce two equivalence relations. 
Let $\A$ be a pca, and $\gamma:\A\rightarrow S$ a generalized numbering.

\begin{definition} 
We define two kinds of equivalence relations on $\A$, 
corresponding to right and left application:  
\begin{itemize} 
\item \makebox[1.45cm][l]{$f\sim_{R_\gamma} g$} if $\fa x \; fx \sim_\gamma gx$.  
\item \makebox[1.45cm][l]{$f\sim_{L_\gamma} g$} if $\fa z \; zf \sim_\gamma zg$. 
\end{itemize} 
\end{definition}

Recall the relation $\sim_e$ from section~\ref{sec:smooth}.
Note that $\sim_e$ is equal to $\sim_{R_\gamma}$ for 
$\gamma$ the identity. 
With these equivalences we can succinctly express extensionality 
as follows:
\begin{align*}
\text{$\A$ is $\gamma$-extensional} & 
\text{ if $f \sim_e g \Longrightarrow f\sim_\gamma g$,} \\
\text{$\A$ is strongly $\gamma$-extensional} & 
\text{ if $f \sim_{R_\gamma} g \Longrightarrow f\sim_\gamma g$.}  
\end{align*}

\begin{proposition} \label{propLR}
For all $f,g\in\A$ we have
$f\sim_{L_\gamma} g \Longrightarrow f\sim_{R_\gamma} g$.
\end{proposition}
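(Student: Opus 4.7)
The plan is to use combinatory completeness of the pca $\A$ to manufacture, for any given $x \in \A$, a witness $z$ that turns left-application-by-$z$ into right-application-by-$x$. Once such a $z$ is in hand, the hypothesis $f \sim_{L_\gamma} g$ (instantiated at that $z$) will directly deliver $fx \sim_\gamma gx$.

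Concretely, fix an arbitrary $x \in \A$. Apply combinatory completeness (or the $\lambda^*$ notation from Section~\ref{sec:pca}) to the term $t(x,y) = yx$ to obtain an element $z = \lambda^* y. yx \in \A$ with the property that $z y \simeq y x$ for every $y \in \A$. In particular, $zf \simeq fx$ and $zg \simeq gx$. By the hypothesis $f \sim_{L_\gamma} g$, we have $zf \sim_\gamma zg$, interpreted via the extended equivalence on closed terms from Definition~\ref{def:equivalent}.

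The mildly delicate point — and really the only place one could trip up — is the handling of undefinedness, which is why Definition~\ref{def:equivalent} is needed. If $fx\darrow$, then $zf\darrow$ with $zf = fx$, so the relation $zf \sim_\gamma zg$ forces $zg\darrow$ and hence $gx\darrow$ with $\gamma(fx) = \gamma(zf) = \gamma(zg) = \gamma(gx)$. If instead $fx\uarrow$, then $zf\uarrow$, and by Definition~\ref{def:equivalent} the relation $zf \sim_\gamma zg$ forces $zg\uarrow$ as well, so $gx\uarrow$, and again $fx \sim_\gamma gx$ holds (as the both-undefined case). Either way, $fx \sim_\gamma gx$.

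Since $x \in \A$ was arbitrary, this establishes $f \sim_{R_\gamma} g$, completing the proof. No computability-theoretic input is required beyond the purely algebraic fact of combinatory completeness; the argument is essentially the observation that left equivalence can ``simulate'' any right application via a suitable combinator.
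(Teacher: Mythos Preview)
Your proof is correct and follows essentially the same route as the paper: both construct, for each $x$, the element $z_x = \lambda^* y.\, yx$ and instantiate the left-equivalence hypothesis at $z_x$ to obtain $fx \sim_\gamma gx$. Your version simply spells out more explicitly the definedness case analysis via Definition~\ref{def:equivalent}, which the paper leaves implicit.
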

\begin{proof}
For every $x$, define $z_x = \lambda^* h. hx$. 
(Note that in every pca we can define such lambda terms, 
cf.\ section~\ref{sec:pca}.) 
Then 
\begin{align*}
f \sim_{L_\gamma} g &\Longrightarrow \fa x\; z_x f \sim_\gamma z_x g \\
&\Longrightarrow \fa x\; fx \sim_\gamma gx \\
&\Longrightarrow f \sim_{R_\gamma} g.
\qedhere
\end{align*}
\end{proof}

\section{Algebraic versus extensional} \label{sec:algext}

For a given pca $\A$ and a generalized numbering $\gamma$ on $\A$, 
note that the following hold:
If $\A$ is $\gamma$-algebraic, then for every $f,g\in\A$, 
\begin{align}
f\sim_\gamma g &\Longrightarrow f\sim_{R_\gamma} g, \label{s1} \\
f\sim_\gamma g &\Longrightarrow f\sim_{L_\gamma} g. \label{s2}
\end{align}
This holds because in Definition~\ref{def:smooth}, 
we can either take the right sides equal, obtaining \eqref{s1}, 
or take the left sides equal, obtaining~\eqref{s2}. 
Also note that \eqref{s2} actually implies \eqref{s1} 
by Proposition~\ref{propLR}.  
We could call \eqref{s1} {\em right-algebraic\/} and
\eqref{s2} {\em left-algebraic}.

\begin{proposition}
$\gamma$-algebraic is equivalent with \eqref{s2}.
\end{proposition}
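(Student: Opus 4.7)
The plan is to prove the two directions of the equivalence separately, using the observation already made in the paper for the forward direction and supplying a symmetric closing argument for the reverse direction.

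For the forward direction, assume that $\A$ is $\gamma$-algebraic and that $f\sim_\gamma g$. For any $z\in\A$, reflexivity yields $z\sim_\gamma z$, and Definition~\ref{def:smooth} applied with $a=a'=z$, $b=f$, $b'=g$ then gives $zf\sim_\gamma zg$. Hence $f\sim_{L_\gamma} g$, which is exactly \eqref{s2}.

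For the converse, assume \eqref{s2} and suppose $a\sim_\gamma a'$ and $b\sim_\gamma b'$; the target is $ab\sim_\gamma a'b'$. I would chain two substitutions, using the transitivity of the extended relation $\sim_\gamma$ on closed terms from Definition~\ref{def:equivalent}. First, replace $a$ by $a'$: by \eqref{s2} we have $a\sim_{L_\gamma} a'$, so $za\sim_\gamma za'$ for every $z\in\A$. Borrowing the trick from the proof of Proposition~\ref{propLR}, set $z_b = \lambda^* h.\, hb$; then combinatory completeness yields $z_b a \simeq ab$ and $z_b a' \simeq a'b$, and therefore $ab\sim_\gamma a'b$. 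Second, replace $b$ by $b'$: by \eqref{s2} we have $b\sim_{L_\gamma} b'$, and taking $z=a'$ in the definition of $\sim_{L_\gamma}$ directly yields $a'b\sim_\gamma a'b'$. Transitivity of $\sim_\gamma$ on closed terms then gives $ab\sim_\gamma a'b'$, which is $\gamma$-algebraicity.

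The only subtle point is that the terms in the chain may fail to be defined, so at each link one must read the equivalences through Definition~\ref{def:equivalent}, under which a pair of undefined terms is automatically $\sim_\gamma$. Since application is strict and $z_b a \simeq ab$ as closed terms, the defined and undefined cases propagate uniformly and no additional case analysis is required; this bookkeeping is the main piece of care needed, rather than a genuine obstacle.
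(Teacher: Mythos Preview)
Your proof is correct and follows essentially the same route as the paper's. The only cosmetic difference is that the paper invokes \eqref{s1} directly for the step $ab\sim_\gamma a'b$ (relying on the earlier remark that \eqref{s2} implies \eqref{s1} via Proposition~\ref{propLR}), whereas you inline that derivation by reintroducing $z_b=\lambda^* h.\,hb$; this is the same argument unpacked.
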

\begin{proof}
That \eqref{s2} follows from $\gamma$-algebraicity was noted above. 
Conversely, assume \eqref{s2} and 
suppose that $a\sim_\gamma a'$ and $b\sim_\gamma b'$. 
We have to prove that $ab \sim_\gamma a'b'$. Indeed we have
\begin{align*}
ab &\sim_\gamma a'b  &&\text{by \eqref{s1}}   \\
   &\sim_\gamma a'b' &&\text{by \eqref{s2}}.
\qedhere
\end{align*}
\end{proof}

On the other hand, if $\A$ is strongly $\gamma$-extensional, we have 
\begin{align}
f\sim_{R_\gamma} g &\Longrightarrow f\sim_\gamma g \label{e3} 
\end{align}
which is the converse of \eqref{s1}.
So we see that in a sense, the notions of algebraicity and 
extensionality are complementary.
We now show that neither of them implies the other. 

\begin{proposition} \label{extsmooth}
$\gamma$-extensional does not imply $\gamma$-algebraic.
\end{proposition}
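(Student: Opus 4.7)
The natural candidate, already pre-announced by Remark~\ref{remark}, is to take $\A = \K_1$ (Kleene's first model) together with the generalized numbering $\gamma = \gamma_e$ from Definition~\ref{def:e}. So the plan is just to verify two things for this choice: that $\K_1$ is $\gamma_e$-extensional, and that $\K_1$ is \emph{not} $\gamma_e$-algebraic.

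The first part is essentially free: $\gamma_e$-extensionality is the implication $\fa a (fa \simeq ga) \Longrightarrow f \sim_{\gamma_e} g$, but by the very definition of $\sim_e$ we have $f \sim_e g \iff \fa x \; fx \simeq gx$, so this implication is a tautology. This is the same remark that was already used in the proof of Theorem~\ref{strictext}.

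For the second part, the quickest route is to invoke Proposition~\ref{prop:divide}: if $\K_1$ were $\gamma_e$-algebraic, then $\K_1/{\sim_e}$ would carry a well-defined application $\overline n \cdot \overline m = \overline{nm}$, and this is exactly what Remark~\ref{remark} says fails. To make the failure concrete (and to avoid any appeal of the form ``the identity on $\K_1/{\sim_e}$ is not algebraic'' that would rely on Remark~\ref{remark} in a circular way), I would exhibit witnesses: by the padding lemma pick $b \neq b'$ with $\vph_b = \vph_{b'}$, so that $b \sim_e b'$, and then choose a computable function $\vph_a$ with $\vph_a(b)$ an index of the constant-$0$ function and $\vph_a(b')$ an index of the constant-$1$ function (such an $a$ exists since we may prescribe $\vph_a$ on the two points $b, b'$ however we like and pad elsewhere). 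Then $a \sim_e a$ and $b \sim_e b'$, but $\vph_{ab} \neq \vph_{ab'}$ and hence $ab \not\sim_e a b'$. So $\sim_e$ is not a congruence on $\K_1$, i.e., $\K_1$ is not $\gamma_e$-algebraic.

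There is no real obstacle; the whole argument is an unpacking of Remark~\ref{remark} plus the trivial observation that $\sim_{\gamma_e}$ is $\sim_e$, so the extensionality clause is automatic. The only thing one might want to double-check is the choice of the computable witness $\vph_a$, which as above is immediate from standard constructions of partial computable functions with prescribed values on finitely many inputs.
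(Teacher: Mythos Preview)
Your proof is correct and follows essentially the same approach as the paper: both take $\A=\K_1$ with $\gamma=\gamma_e$, observe that $\gamma_e$-extensionality is tautological, and refute algebraicity by producing $b\sim_e b'$ with $b\neq b'$ and an element $a$ (the paper's $z$) such that $ab\not\sim_e ab'$. Your version is slightly more explicit in naming the padding lemma and spelling out the construction of $\vph_a$, but the underlying counterexample is identical.
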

\begin{proof}
Consider Kleene's first model $\K_1$, and let $\gamma=\gamma_e$ be 
the numbering from Proposition~\ref{prop:precomplete}.
Every pca is trivially $\gamma_e$-extensional, 
as $f\sim_e g \Rightarrow f\sim_e g$.
However, $\K_1$ is not $\gamma_e$-algebraic. Namely, \eqref{s2} above
does not hold: There are $n,m\in \K_1$ such that 
$n\sim_e m$, i.e.\ $n$ and $m$ are codes of the same partial computable 
function, but $n\neq m$, so that $\fa z \; zn\sim_e zm$ does not hold:
There is a p.c.\ function $\vph$ such that $\vph(n) \not\sim_e \vph(m)$. 
\end{proof}

We can strengthen Proposition~\ref{extsmooth} to the following.

\begin{theorem}  \label{strongextsmooth}
Strong $\gamma$-extensional does not imply $\gamma$-algebraic.
\end{theorem}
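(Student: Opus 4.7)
The plan is to exhibit an explicit $(\A,\gamma)$ witnessing strong $\gamma$-extensionality with $\sim_\gamma$ failing to be a congruence. I take $\A = \K_1$ and let $\sim_\gamma$ be the ``observational equivalence'' on $\K_1$, defined as the greatest fixed point of the monotone operator
\[
F(R) = \bigl\{(f,g) \,:\, \fa x\in\K_1\ (fx,gx)\in R \text{ in the extended sense of Definition~\ref{def:equivalent}}\bigr\}
\]
on binary relations on $\K_1$, and I let $\gamma:\K_1\rightarrow \K_1/{\sim_\gamma}$ be the canonical projection.

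First I would verify that $\sim_\gamma$ is indeed an equivalence relation. Reflexivity follows because the identity relation $\mathrm{Id}$ is a post-fixed point of $F$, and every post-fixed point is contained in the greatest fixed point by Knaster--Tarski. Symmetry follows because $(\sim_\gamma)^{\mathrm{op}}$ is again a post-fixed point of $F$. For transitivity, one checks that the composite $\sim_\gamma\circ\sim_\gamma$ is post-fixed: if $fx\sim_\gamma gx$ and $gx\sim_\gamma hx$ hold for every $x$ in the extended sense, then for each $x$ either all of $fx,gx,hx$ diverge or all converge, with $gx$ witnessing $(fx,hx)\in\sim_\gamma\circ\sim_\gamma$. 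Strong $\gamma$-extensionality is then immediate from the fixed-point equation $\sim_\gamma=F(\sim_\gamma)$: the hypothesis $\fa x\, fx\sim_\gamma gx$ is literally the condition $(f,g)\in F(\sim_\gamma)=\sim_\gamma$.

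To see that $\sim_\gamma$ is not a congruence, I would choose two distinct indices $n_1\neq n_2$ of the totally undefined p.c.\ function, which exist because the empty function has infinitely many indices. Since $n_1\cdot x$ and $n_2\cdot x$ diverge for every $x$, the pair $(n_1,n_2)$ lies in $F(R)$ for every $R$, hence $n_1\sim_\gamma n_2$. Next pick an index $a$ of the totally undefined function and an index $b$ of the identity function: for every $x$ we have $a\cdot x\uarrow$ while $b\cdot x\darrow$, so $(a\cdot x,b\cdot x)$ fails the extended-equality clause for every $R$, and consequently $a\not\sim_\gamma b$. Finally, by parametrization fix any total $z\in\K_1$ with $z\cdot n_1=a$ and $z\cdot n_2=b$. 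Then $z\cdot n_1=a\not\sim_\gamma b=z\cdot n_2$ although $n_1\sim_\gamma n_2$, so left multiplication by $z$ does not respect $\sim_\gamma$ and $\gamma$-algebraicity fails.

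The main technical subtlety is the careful handling of undefined values inside the coinductive definition and the routine but slightly delicate verification that the greatest fixed point of $F$ is automatically an equivalence relation rather than just an arbitrary binary relation. Once these foundational points are settled, the separation is completely explicit: two distinct indices of the empty function are $\sim_\gamma$-equivalent but can be sent by a single computable function to two p.c.\ functions with distinguishable convergence behavior.
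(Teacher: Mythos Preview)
Your proof is correct and follows the same overall strategy as the paper: both take $\A=\K_1$, define $\sim_\gamma$ as a fixed point of the operator $F(R)=\{(f,g):\forall x\,(fx,gx)\in R\text{ in the extended sense}\}$ so that the fixed-point equation gives strong $\gamma$-extensionality for free, and then break algebraicity by sending two distinct but $\sim_\gamma$-equivalent indices via a single computable $z$ to two $\sim_\gamma$-inequivalent elements. The one real difference is \emph{which} fixed point is used: the paper takes the \emph{least} fixed point of $F$ extending $\sim_e$, whereas you take the \emph{greatest} fixed point (applicative bisimilarity). Your choice makes $n_1\sim_\gamma n_2$ immediate from the coinductive description, and you also spell out why the greatest fixed point is an equivalence relation, a point the paper leaves implicit. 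The witnesses for non-triviality and for the failure of \eqref{s2} are essentially the same in both arguments.
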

\begin{proof}
We show that \eqref{e3} does not imply \eqref{s2}.
As a pca we take Kleene's first model $\K_1$, and we define a 
generalized numbering $\gamma$ on it as follows. 
We start with the equivalence $\sim_e$ on $\K_1$, 
and we let $\sim_\gamma$ be the smallest extension of $\sim_e$
such that 
\begin{equation} \label{fixpoint}
f\sim_\gamma g \Longleftrightarrow \fa x \; fx\sim_\gamma gx.
\end{equation}
(Here we read $fx\sim_\gamma gx$ as in Definition~\ref{def:equivalent}.)
The equivalence relation $\sim_\gamma$ is the smallest fixed point of 
the monotone operator that, given an equivalence $\sim$ on $\K_1$ 
that extends $\sim_e$, defines a new equivalence $\approx$ by 
\begin{equation} \label{operator}
f\approx g \Longleftrightarrow \fa x \; fx \sim gx.
\end{equation}
The existence of $\sim_\gamma$ is then guaranteed by the Knaster-Tarski 
theorem on fixed points of monotone operators~\cite{KnasterTarski}.
Note that by Remark~\ref{remark}, $\K_1/{\sim_e}$ is not a pca, 
and neither are the extensions $\K_1/{\approx}$, but this is not 
a problem for the construction \eqref{operator}, since the 
application $fx$ keeps taking place in the pca~$\K_1$.
Note that by \eqref{fixpoint} we have that 
$\K_1$ is strongly $\gamma$-extensional.

We claim that \eqref{s2} fails for $\gamma$, and hence that 
$\K_1$ is not $\gamma$-algebraic. 
First we observe that $\gamma$ is not trivial, i.e.\ 
does not consist of only one equivalence class. 
Namely, let $ax\uarrow$ for every~$x$, and let $b\in \K_1$ 
be total. Then obviously $\fa x \; ax\sim_\gamma bx$ does not hold,
hence by \eqref{fixpoint} we have $a\not\sim_\gamma b$.

For the failure of \eqref{s2} we further need the existence of 
$f\sim_\gamma g$ such that $f\neq g$. 
Such $f$ and $g$ exist, since they already exist for $\sim_e$, 
and $\sim_\gamma$ extends~$\sim_e$.
Now let $a\not\sim_\gamma b$ (the existence of which we noted above), 
and let $z$ be a code of a partial computable function such that 
$zf=a$ and $zg=b$. 
Then $zf\not\sim_\gamma zg$, hence $\fa z \; zf\sim_\gamma zg$ does 
not hold, and thus \eqref{s2} fails. 
\end{proof}

\begin{corollary} \label{cor}
\eqref{s2} implies \eqref{s1}, but not conversely.
\end{corollary}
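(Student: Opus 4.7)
The plan is to read the corollary as two claims and dispatch each by pointing at machinery already built up in the paper; no new construction is needed.

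First I would handle the forward direction, \eqref{s2} $\Rightarrow$ \eqref{s1}. This was already flagged in the text just before the proposition that \eqref{s2} is equivalent to $\gamma$-algebraicity. Concretely, assume \eqref{s2} and let $f\sim_\gamma g$. Then \eqref{s2} yields $f\sim_{L_\gamma}g$, and Proposition~\ref{propLR} converts this to $f\sim_{R_\gamma}g$. That is exactly \eqref{s1}, so this direction is immediate.

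For the non-converse, the key observation is that the generalized numbering $\gamma$ on $\K_1$ constructed in the proof of Theorem~\ref{strongextsmooth} is tailored precisely for this. By the fixed-point equation \eqref{fixpoint},
\begin{equation*}
f\sim_\gamma g \;\Longleftrightarrow\; \fa x\; fx\sim_\gamma gx,
\end{equation*}
so in particular the left-to-right implication gives $f\sim_\gamma g \Rightarrow f\sim_{R_\gamma}g$, i.e.\ \eqref{s1} holds for this $\gamma$. Meanwhile, the last paragraph of the proof of Theorem~\ref{strongextsmooth} exhibits $f,g\in\K_1$ with $f\sim_\gamma g$ but $f\neq g$ (inherited from $\sim_e$), together with a code $z$ of a partial computable function separating them via $zf=a\not\sim_\gamma b=zg$; this shows $\fa z\; zf\sim_\gamma zg$ fails, so \eqref{s2} does not hold for $\gamma$. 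Thus $\K_1$ with this $\gamma$ witnesses \eqref{s1} without \eqref{s2}.

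There is no genuine obstacle here: the corollary is essentially a bookkeeping statement that repackages the content of Proposition~\ref{propLR} and Theorem~\ref{strongextsmooth}. The only thing one has to notice, and I would make it explicit in the write-up, is that strong $\gamma$-extensionality in the construction of Theorem~\ref{strongextsmooth} was proved via the biconditional \eqref{fixpoint}, and that biconditional silently delivers the \eqref{s1} direction for free, which is what makes the same example serve as a separator here.
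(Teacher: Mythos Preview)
Your proof is correct and essentially identical to the paper's. Both directions are handled the same way: the forward implication via Proposition~\ref{propLR}, and the failure of the converse by reusing the numbering from Theorem~\ref{strongextsmooth}, where the biconditional \eqref{fixpoint} gives \eqref{s1} (and \eqref{e3}) while \eqref{s2} was shown to fail.
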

\begin{proof}
\eqref{s2} implies \eqref{s1} by Proposition~\ref{propLR}.
In the counterexample of Theorem~\ref{strongextsmooth}
the equivalence \eqref{fixpoint} holds,
so both \eqref{s1} and \eqref{e3} hold, but \eqref{s2} does not.
\end{proof}

\begin{proposition}
$\gamma$-algebraic does not imply $\gamma$-extensional.
\end{proposition}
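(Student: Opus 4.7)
The plan is to take the identity numbering on a pca that is already known not to be extensional. If $\gamma$ is the identity on $\A$, then $\sim_\gamma$ is literal equality on $\A$, which is automatically a congruence since $a=a'$ and $b=b'$ give $ab=a'b'$ (with both sides defined or both undefined under the strict application convention). So algebraicity is free. What remains is to pick $\A$ so that there exist distinct elements agreeing on all inputs, which will exhibit the failure of $\gamma$-extensionality.

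Concretely I would choose $\A=\K_1$ and $\gamma = \gamma_\A \colon \K_1 \to \K_1$, the identity. As noted, $\sim_\gamma$ is equality, and equality is trivially a congruence; thus $\K_1$ is $\gamma_\A$-algebraic. To refute $\gamma_\A$-extensionality I would invoke the well-known fact that every partial computable function has infinitely many indices (e.g.\ by the padding lemma), and pick $n\neq m$ with $\vph_n = \vph_m$. Then for every $x\in\omega$ we have $nx = \vph_n(x) \simeq \vph_m(x) = mx$, while $n \neq m$ means $n \not\sim_\gamma m$. Hence the premiss $\fa x(nx\simeq mx)$ of \eqref{ext} holds but the conclusion $n\sim_\gamma m$ fails.

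There is essentially no obstacle here: the point of the construction is precisely that the two notions decouple when the numbering is trivialised to identity, since algebraicity then becomes vacuous while extensionality reduces to extensionality of the underlying pca. One could of course substitute any non-extensional pca for $\K_1$ (for instance the graph model $P\omega$ mentioned in the proof of Proposition~\ref{prop:converse}), but $\K_1$ gives the cleanest and most elementary witness and matches the setting used in the preceding counterexamples of this section.
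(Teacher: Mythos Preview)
Your proposal is correct and matches the paper's own proof: both take $\gamma$ to be the identity on a non-extensional pca (the paper also names $\K_1$), observe that the identity is trivially algebraic, and conclude. Your version merely spells out the padding-lemma witness for the failure of extensionality that the paper leaves implicit.
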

\begin{proof}
Let $\A$ be a pca that is not extensional (such as Kleene's $\K_1$), 
and let $\gamma$ be the identity on $\A$. 
Every pca is always $\gamma$-algebraic, so this provides a 
counterexample to the implication.
\end{proof}

The proof of Theorem~\ref{strongextsmooth} shows that 
\eqref{e3} does not imply \eqref{s2}. 
Since \eqref{s2} strictly implies \eqref{s1} by Corollary~\ref{cor}, 
a stronger statement would be to show that 
\eqref{e3} does not imply~\eqref{s1}. We can obtain this with a 
variation of the earlier proof (though this construction
is less natural, and only serves a technical purpose).

\begin{theorem}
Strong $\gamma$-extensional \eqref{e3} does not imply 
right-al\-geb\-raic~\eqref{s1}.
\end{theorem}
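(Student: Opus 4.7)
The plan is to adapt the construction in the proof of Theorem~\ref{strongextsmooth} by seeding the base relation with a carefully chosen extra pair $(a,b)$ and closing only under the one-sided rule \eqref{e3} rather than the biconditional \eqref{fixpoint}. Work in $\A = \K_1$. First choose $y^* \in \K_1$ and distinct codes $p, q \in \K_1$ so that $\vph_p(y^*)\darrow$ while $\vph_q(y^*)\uarrow$ (for instance, let $\vph_p$ be a step function defined only at $y^*$, and let $\vph_q$ be the everywhere undefined function). Then use combinatory completeness to pick distinct $a, b \in \K_1$ with $\vph_a(x) = p$ and $\vph_b(x) = q$ for all $x$, arranging that $\{a, b\} \cap \{p, q\} = \emptyset$ and $a \not\sim_e b$.

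Define $\sim_\gamma$ as the smallest equivalence relation on $\K_1$ that contains $\sim_e \cup \{(a, b)\}$ and is closed under
\[
\fa x\; fx \sim_\gamma gx \;\Longrightarrow\; f \sim_\gamma g,
\]
where the premiss is read using the extended Kleene equality of Definition~\ref{def:equivalent}. Its existence is guaranteed by the Knaster--Tarski theorem applied to the monotone operator that sends an equivalence relation $R$ to the equivalence closure of $R \cup \sim_e \cup \{(a,b)\} \cup \{(f, g) : \fa x\; fx\, R\, gx\}$. By construction, $\K_1$ is strongly $\gamma$-extensional, so \eqref{e3} holds.

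For the failure of \eqref{s1}, note that $a \sim_\gamma b$ while $ax = p$ and $bx = q$ for every $x$, so it suffices to show $p \not\sim_\gamma q$. The key claim is the following invariant, proved by transfinite induction on the stages $\sim_\alpha$ of the Knaster--Tarski iteration: at every stage, no $\sim_\alpha$-equivalence class contains both a code $n$ with $\vph_n(y^*)\darrow$ and a code $m$ with $\vph_m(y^*)\uarrow$. The base $\sim_0$ is immediate, since every $\sim_e$-class consists of codes of one partial function and is therefore uniform at $y^*$, while the seed $(a,b)$ only merges two $\sim_e$-classes that both sit on the ``defined'' side, as $\vph_a(y^*) = p$ and $\vph_b(y^*) = q$ are both defined. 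In the inductive step, any new pair $(n, m)$ added by the \eqref{e3}-rule satisfies $ny^* \sim_\alpha my^*$ in the extended sense, which by Definition~\ref{def:equivalent} forces $\vph_n(y^*)$ and $\vph_m(y^*)$ to be either both defined or both undefined; equivalence closure then preserves the partition into ``defined at $y^*$'' and ``undefined at $y^*$''. Since $p$ sits on the defined side and $q$ on the undefined side, they cannot share a class, so $p \not\sim_\gamma q$.

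The main technical obstacle is exactly this invariant and its transfinite induction, which hinges on the fact that the extended Kleene equality of Definition~\ref{def:equivalent} refuses to relate a defined term to an undefined one, so the carefully engineered domain mismatch at $y^*$ survives every application of the closure rule.
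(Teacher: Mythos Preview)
Your argument is correct, and it follows the paper's overall strategy: work in $\K_1$, seed an equivalence with a single deliberately chosen pair, close under the one-sided rule \eqref{e3} via Knaster--Tarski, and exploit the fact that the extended relation of Definition~\ref{def:equivalent} never relates a defined term to an undefined one. The difference is where you place the domain mismatch. The paper seeds directly with $(f,g)$ such that $fx_0\darrow$ (and total) while $gx_0\uarrow$ for some fixed $x_0$; then $fx_0\not\sim_\gamma gx_0$ is immediate from Definition~\ref{def:equivalent} regardless of how large $\sim_\gamma$ grows, and no invariant or transfinite induction is needed. You instead seed with a pair of \emph{total} constant functions $(a,b)$ with values $p,q$, so both $ax$ and $bx$ are always defined and the quick definedness argument is unavailable at this level; you then push the mismatch down to $py^*$ versus $qy^*$ and recover it through the ``uniform definedness at $y^*$'' invariant. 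This works, but buys you extra bookkeeping that the paper's choice of seed avoids. If you simply let your seed pair itself have mismatched domains (as the paper does), the invariant and the disjointness condition $\{a,b\}\cap\{p,q\}=\emptyset$ become unnecessary.
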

\begin{proof}
We show that \eqref{e3} does not imply \eqref{s1}.
As before, we take Kleene's first model $\K_1$, and we define a
generalized numbering $\gamma$ on it.
Now we do not want the equivalence \eqref{fixpoint} to hold, 
so we do not start with the equivalence relation $\sim_e$.

Let $f,g\in\K_1$ and $x\in\omega$ be such that 
$fx\darrow$ is total, and $gx\uarrow$.
Start with $f \sim_\gamma g$, so that $\gamma$ equates $f$ and $g$
and nothing else. 
Let $\sim_\gamma$ be the smallest extension of this equivalence 
relation such that
\begin{equation*} 
\fa x (fx\sim_\gamma gx) \Longrightarrow f\sim_\gamma g,
\end{equation*}
where again, we read $fx\sim_\gamma gx$ as in 
Definition~\ref{def:equivalent}.
As before, the equivalence relation $\sim_\gamma$ exists by the 
Knaster-Tarski theorem~\cite{KnasterTarski}.
This ensures that $\gamma$ satisfies~\eqref{e3}.

We claim that $\fa x \; fx\sim_\gamma gx$ does not hold, 
and hence that \eqref{s1} fails.
Note that we would only have $\fa x \; fx\sim_\gamma gx$ if 
at some stage $\fa x,y \; fxy\sim_\gamma gxy$ would hold. 
However, since $fx$ is total, $fxy$ is always defined, 
whereas $gxy$ is never defined by choice of~$x$.
Hence, by Definition~\ref{def:equivalent}, $fxy\not\sim_\gamma gxy$, 
whatever $\gamma$ may be.
\end{proof}

Consider the following property, which is the converse of 
the implication from Proposition~\ref{propLR}:
\begin{equation}
f\sim_{R_\gamma} g \Longrightarrow f\sim_{L_\gamma} g. \label{eiggamma}
\end{equation}
This property expresses that whenever $f$ and $g$ denote the
same function, they are {\em inseparable\/} in the pca 
(compare Barendregt~\cite[p48]{Barendregt}).
Combining algebraicity and extensionality, we obtain the following relation. 

\begin{proposition} \label{converse}
$\gamma$-algebraic + strongly $\gamma$-extensional $\Longrightarrow$ \eqref{eiggamma}.
\end{proposition}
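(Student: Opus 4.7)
The plan is to chain the two hypotheses into the definition of $\sim_{L_\gamma}$ in a single short argument. Suppose that $\A$ is both $\gamma$-algebraic and strongly $\gamma$-extensional, and let $f, g \in \A$ satisfy $f \sim_{R_\gamma} g$, i.e.\ $\fa x\; fx \sim_\gamma gx$.

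First, I would invoke strong $\gamma$-extensionality (Definition~\ref{def:strext}): its premise $\fa x\; fx \sim_\gamma gx$ is exactly what we have, so we obtain $f \sim_\gamma g$. This is the step where strong extensionality is essential---ordinary $\gamma$-extensionality would only permit us to pass from $\fa x\; fx \simeq gx$, which is a strictly stronger hypothesis by Theorem~\ref{strictext}.

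Next, I would use $\gamma$-algebraicity to promote $f \sim_\gamma g$ to $\fa z\; zf \sim_\gamma zg$. Concretely, fix any $z \in \A$; since $z \sim_\gamma z$ and $f \sim_\gamma g$, Definition~\ref{def:smooth} yields $zf \sim_\gamma zg$ (using the extended relation from Definition~\ref{def:equivalent}, so that the case where $zf$ and $zg$ are both undefined is automatically covered). Since $z$ was arbitrary, this is precisely $f \sim_{L_\gamma} g$, establishing \eqref{eiggamma}.

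I do not anticipate a real obstacle here: the argument is essentially a two-line composition, and the only subtle point is to notice that algebraicity is formulated on the extended Kleene-style relation $\sim_\gamma$ for closed terms, so that applying $z$ on the left is legitimate even when $zf$ or $zg$ diverges. The statement can be read as saying that algebraicity provides the congruence needed to turn the ``functional'' equivalence $\sim_\gamma$ (supplied by strong extensionality from $\sim_{R_\gamma}$) into the ``contextual'' equivalence $\sim_{L_\gamma}$.
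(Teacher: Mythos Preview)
Your proof is correct and follows essentially the same two-step chain as the paper: first use strong $\gamma$-extensionality to pass from $f\sim_{R_\gamma} g$ to $f\sim_\gamma g$, then use $\gamma$-algebraicity to conclude $f\sim_{L_\gamma} g$. The only cosmetic difference is that the paper cites the already-recorded consequence \eqref{s2} of algebraicity, whereas you re-derive it inline by taking $z\sim_\gamma z$ in Definition~\ref{def:smooth}.
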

\begin{proof} 
By $\gamma$-algebraicity we have \eqref{s2}, hence 
\begin{align*}
f\sim_{R_\gamma} g &\Longrightarrow f\sim_\gamma g &&\text{by strong $\gamma$-extensionality}\\
&\Longrightarrow f\sim_{L_\gamma} g &&\text{by \eqref{s2}.}
\qedhere
\end{align*}
\end{proof}

Note that the identity $\gamma_\A$ is algebraic for any pca $\A$. 
So any extensional $\A$ (meaning $\gamma_\A$-extensional, 
which in this case coincides with strongly $\gamma_\A$-extensional)
is an example where the conditions of Proposition~\ref{converse} hold.

\section{A note on 1-1 numberings of uniformly c.e.\ classes}\label{sec:1-1}

A class of c.e.\ sets $\LL$ is called {\em uniformly c.e.\/} if 
it admits a computable numbering of its indices, i.e.\ if it 
is of the form 
$$
\LL=\{W_{f(n)}:n\in\omega\}
$$ 
with $f$ a computable function. 
Such a numbering $f$ is called 1-1 if the numbering does not have any 
repetitions, i.e.\ if the sets $W_{f(n)}$ are all different.
A classic question is which uniformly c.e.\ classes admit a \mbox{1-1} numbering. 
For a list of references about this topic see 
Odifreddi~\cite[p228 ff.]{Odifreddi} and Kummer \cite{Kummer}. 

Friedberg~\cite{Friedberg} showed that the class of all c.e.\ sets 
has a 1-1 numbering. 
The difficulty of course lies in the fact that equality for c.e.\ sets
is hard to decide: The set $\{(n,m)\mid W_n=W_m\}$ 
is $\Pi^0_2$-complete, cf.\ Soare~\cite{Soare}.
Here we prove two results relating the problem above to the 
complexity of the equality relation. 

\begin{proposition}
If for an infinite uniformly c.e.\ class $\LL$ the equality relation is 
$\Pi^0_1$ then $\LL$ has a 1-1 numbering. 
\end{proposition}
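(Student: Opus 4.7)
The plan is to use the hypothesis to pick out, in a computably enumerable way, the $\sim_f$-least index in each equivalence class, where $n \sim_f m$ iff $W_{f(n)} = W_{f(m)}$, and then enumerate those indices via $f$.

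First I would observe that since the equality relation $\{(n,m) : W_{f(n)} = W_{f(m)}\}$ is $\Pi^0_1$, the inequality relation is $\Sigma^0_1$, hence c.e. Then I would define
\[
M = \bigset{i \in \omega : W_{f(j)} \neq W_{f(i)} \text{ for every } j < i}
\]
and verify that $M$ is c.e., using that inequality is c.e.\ uniformly in $(i,j)$ and that bounded universal quantification preserves $\Sigma^0_1$. Moreover, $M$ is infinite: since $\LL$ is infinite there are infinitely many $\sim_f$-equivalence classes, and the minimal index of each class lies in $M$.

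Next, I would let $h$ be a computable strictly increasing enumeration of $M$, which exists because $M$ is an infinite c.e.\ set, and set $g(n) = f(h(n))$. This $g$ is computable; and if $n < m$ then $h(n) < h(m)$, so applying the defining property of $M$ with $i = h(m)$ and $j = h(n)$ gives $W_{g(n)} \neq W_{g(m)}$, so $g$ numbers pairwise distinct sets. Conversely, each $W_{f(k)} \in \LL$ equals $W_{f(i)}$ where $i$ is the least index with $W_{f(i)} = W_{f(k)}$, and this $i$ lies in $M$, so every element of $\LL$ appears as some $W_{g(n)}$. The whole argument rests on the observation that ``$i$ is the $\sim_f$-least index of its equivalence class'' is a $\Sigma^0_1$ condition on $i$, which is immediate once inequality is known to be c.e.; there is no real obstacle beyond making this observation precise.
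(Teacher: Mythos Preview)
Your proof is correct and follows essentially the same approach as the paper: both observe that inequality is c.e., so the set of $\sim_f$-minimal indices $M=\{n : (\forall m<n)\,W_{f(m)}\neq W_{f(n)}\}$ is c.e., and then enumerate $f$ restricted to~$M$. Your version is slightly more explicit in spelling out the enumeration of $M$ via a strictly increasing computable function, but the argument is the same.
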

\begin{proof}
Let $f$ be computable such that $\LL=\{W_{f(n)}:n\in\omega\}$. 
The statement that the equality relation of $\LL$ is $\Pi^0_1$ 
means that the set 
$$
U=\{(n,m) \mid W_{f(n)}\neq W_{f(m)}\}
$$
is c.e. Enumerate $\LL$ as follows. Enumerate $f(n)$ in $\LL$ if 
and only if $\mbox{$(\forall m<n)$}$ $\mbox{$[(n,m)\in U]$}$. 
This is clearly a computable enumeration because $U$ is c.e. 
Also, it is easy to see that for every set in $\LL$, 
$f(n)$ is enumerated into $\LL$ 
for the minimal $n$ such that $f(n)$ is a code for this set. This 
proves that the enumeration is a 1-1 numbering. 
\end{proof}

\begin{proposition}
A uniformly c.e.\ class $\LL$ for which the equality relation is
$\Sigma^0_1$ does not necessarily have a 1-1 numbering. 
\end{proposition}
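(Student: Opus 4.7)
The plan is to extract $\LL$ from a suitably chosen computably enumerable equivalence relation (ceer), arranged so that the equality relation on the natural indices of $\LL$ coincides with that ceer. A 1-1 numbering of $\LL$ would then yield a computable transversal for the ceer, and choosing the ceer to admit no such transversal gives the desired counterexample.

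Concretely, let $K$ be a noncomputable c.e.\ set and define $n\sim m$ iff $n=m$ or $\{n,m\}\subseteq K$. This is a c.e.\ equivalence relation whose classes are $\{n\}$ for $n\in\cmp K$ together with the single class $K$. Writing $A_n$ for the $\sim$-class of $n$, I set
\[
\LL=\{A_n : n\in\omega\}=\{K\}\cup\bigset{\{n\} : n\in\cmp K}.
\]
Taking $f$ with $W_{f(n)}=A_n$ makes $\LL$ uniformly c.e., and
\[
\{(n,m)\mid W_{f(n)}=W_{f(m)}\}=\{(n,m)\mid n\sim m\}
\]
is c.e., so the equality relation of $\LL$ is $\Sigma^0_1$.

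Now suppose, toward a contradiction, that $g$ is a computable 1-1 numbering of $\LL$. Since no member of $\LL$ is empty, $r(k):=\mu x.\,x\in W_{g(k)}$ is a total computable function, and its range $T$ is c.e. Because $g$ enumerates $\LL$ bijectively, $T$ meets each $\sim$-class in exactly one element, so $T$ is a transversal for $\sim$. The key step is to argue that $T$ is also co-c.e.\ via
\[
n\notin T \Longleftrightarrow \ex k\,\bigl(r(k)\sim n \,\wedge\, r(k)\neq n\bigr),
\]
which holds because there is a unique $k^*$ with $W_{g(k^*)}=A_n$, and $r(k^*)$ is then the unique representative of $A_n$ that lies in $T$.

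Once $T$ is known to be computable, the contradiction is immediate: any transversal of $\sim$ has the shape $\cmp K\cup\{t_0\}$ for some $t_0\in K$, so $K=\cmp T\cup\{t_0\}$ would itself be computable, contradicting the choice of $K$. The main obstacle is the co-c.e.\ step for $T$, which genuinely uses both surjectivity and injectivity of $g$ (one needs that the class $A_n$ appears as $W_{g(k^*)}$ for exactly one $k^*$, so that $A_n\cap T$ is the singleton $\{r(k^*)\}$). This is the only point where the argument is more than bookkeeping.
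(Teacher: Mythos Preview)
Your argument is correct and takes a genuinely different route from the paper's. The paper builds $\LL$ by a direct diagonalization: for each $e$ it inserts two sets $W_{x_e}$ and $W_{y_e}$, with $W_{y_e}=\{2e,2e+1\}$ and $W_{x_e}$ starting as $\{2e\}$ and growing to $\{2e,2e+1\}$ exactly when $\varphi_e$ produces two distinct codes of sets containing~$2e$; this defeats $\varphi_e$ as a 1-1 enumeration case by case, and the $\Sigma^0_1$ equality check comes from reading off the unique even element of each set. Your approach is more structural: you realize $\LL$ as the set of classes of a ceer with no computable transversal and show that any 1-1 numbering would manufacture such a transversal. This connects the result directly to the ceer framework already invoked in the paper's introduction, and the transversal argument is reusable (any ceer with c.e.\ nonempty classes and no computable transversal would do). The paper's construction, by contrast, is self-contained and makes the $\Sigma^0_1$ nature of equality completely explicit without appealing to an auxiliary undecidability fact.

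One small repair: as written, $r(k)=\mu x.\,x\in W_{g(k)}$ denotes the \emph{least} element of $W_{g(k)}$, and that is not computable from a c.e.\ index in general. You only need a uniform computable selector, so take $r(k)$ to be the first element to appear in a fixed enumeration of $W_{g(k)}$; since every member of $\LL$ is nonempty this is total and computable, and the rest of your argument (the transversal property of $T$, the $\Sigma^0_1$ description of $\cmp T$, and the recovery of $K$ from $T$) goes through unchanged.
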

\begin{proof}
Define a uniformly c.e.\ class $\LL$ as follows. 
We want to ensure that $\vph_e$ is not a 1-1 enumeration of $\LL$.
To this end, for every $e$ we put two codes $x_e$ and $y_e$ of c.e.\ sets 
into $\LL$, with $W_{y_e}=\{2e, 2e+1\}$. 
The code $x_e$ is defined by the following enumeration procedure 
for $W_{x_e}$. Enumerate $2e$ into $W_{x_e}$. Search for two 
different codes $a$, $b$ in the range of $\varphi_e$ such that 
$2e\in W_a$ and $2e\in W_b$. If such $a$ and $b$ are found enumerate 
$2e+1$ in $W_{x_e}$. The class $\LL$ thus defined is uniformly c.e.\ 
and has a $\Sigma^0_1$ equality relation, because to find out whether 
two sets in $\LL$ are equal, enumerate them and compare the first 
even elements enumerated (every set in $\LL$ contains exactly one 
even element). If these are equal, say they both equal $2e$, 
then we know that the two sets are $W_{x_e}$ and $W_{y_e}$, and 
these are equal if and only if the $\Sigma^0_1$-event from the 
enumeration procedure of $W_{x_e}$ occurs. 

To prove that no computable function 1-1 enumerates $\LL$, 
fix $e$ such that $\varphi_e$ is total. Now if 
two different codes $a$ and $b$ occur in the range of $\varphi_e$ such 
that $2e\in W_a\cap W_b$, then $W_{x_e}=W_{y_e}$ by 
definition of $x_e$, 
hence $\varphi_e$ does not one-one enumerate $\LL$. In the 
case that such $a$ and $b$ do not appear in ${\rm range}(\varphi_e)$
it holds that $W_{x_e}\neq W_{y_e}$, hence $\varphi_e$ does not 
enumerate all the elements of $\LL$. 
\end{proof}

\end{document}